\DeclareMathOperator{\rang}{ran}
\DeclareMathOperator{\spann}{span}
\begin{document}

\newcommand{\R}{{\mathbb{R}}}
\newcommand{\NN}{{\mathbb{N}}}
\newcommand{\la}{\lambda}
\newcommand{\ve}{\varepsilon}
\newcommand{\vp}{\varphi}
\newcommand{\F}{{\mathcal{F}}}
\newcommand{\cD}{{\mathcal{D}}}
\newcommand{\cB}{\mathcal{B}}
\newcommand{\cL}{\mathcal{L}}
\newcommand{\cH}{{\mathcal{H}}}
\newcommand{\dx}{{\,{\mathrm d}x}}
\newcommand{\sn}{\sum_{n=0}^\infty}
\newcommand{\sm}{\sum_{m=0}^\infty}
\newcommand{\sj}{\sum_{j=1}^{2n+1}}
\newcommand{\sk}{\sum_{k=1}^{2m+1}}

\newcommand{\Om}{\Omega}
\newcommand{\rL}{{\mathrm{L}}}
\newcommand{\rC}{{\mathrm{C}}}
\newcommand{\rCe}{{\mathrm{C}^{(1)}}}
\newcommand{\LD}{{\rL^2(D)}}
\newcommand{\LO}{{\rL^2(\Omega)}}
\newcommand{\LB}{{\rL^2(\cB)}}
\newcommand{\lan}{\langle}
\newcommand{\ran}{\rangle}
\newcommand{\llan}{\left\langle}
\newcommand{\rran}{\right\rangle}
\newtheorem{them}{Theorem}
\newtheorem{defi}[them]{Definition}
\newtheorem{coll}[them]{Corollary}
\newtheorem{lemma}[them]{Lemma}
\newtheorem{algo}[them]{Algorithm}
\newtheorem{rem}[them]{Remark}
\newtheorem{probl}[them]{Problem}

\newcommand{\Id}{{\mathcal{I}}}

\title{On the Convergence Theorem for the Regularized Functional Matching Pursuit (RFMP) Algorithm}
\author{\footnotesize
\begin{minipage}{0.45\textwidth}
Volker Michel
\\ Geomathematics Group\\ Department of Mathematics\\ University of Siegen\\ Germany\\ \url{michel@mathematik.uni-siegen.de}
\end{minipage}
\and 
\begin{minipage}{0.45\textwidth}\footnotesize
Sarah Orzlowski
\\ Geomathematics Group\\ Department of Mathematics\\ University of Siegen\\ Germany\\ \url{orzlowski@mathematik.uni-siegen.de}
\end{minipage}
}
\date{\today}
\maketitle

\begin{abstract}
The RFMP is an iterative regularization method for a class of linear inverse problems. It has proved to be applicable to problems which occur, for example, in the geosciences. In the early publications \cite{Fischer_Diss,FisMic2012}, it was shown that the iteration converges in the unregularized case to an exact solution. In \cite{Michel_RFMP} and \cite{MicTel2014}, it was later shown (for two different scenarios) that the iteration also converges in the regularized case, where the limit of the iteration is the solution of the Tikhonov-regularized normal equation. However, the condition of these convergence proofs cannot be satisfied and, therefore, has to be weakened, as it was pointed out for the convergence theorem of the related iterated Regularized Orthogonal Functional Matching Pursuit (ROFMP) algorithm in \cite{MicTel2016}. Moreover, the convergence proof in \cite{Michel_RFMP} contained a minor error. For these reasons, we reformulate here the convergence theorem for the RFMP and its proof. We also use this opportunity to extend the algorithm for an arbitrary infinite-dimensional separable Hilbert space setting. In addition, we particularly elaborate the cases of non-injective and non-surjective operators.
\end{abstract}

\section{Summary of the RFMP}
The RFMP is an algorithm for the regularization of inverse problems of the following type.
\begin{probl}\label{Problem} 
  Let $\cH$ be a separable and infinite-dimensional Hilbert space (of functions), $\ell\in\NN$ be the dimension of the data space, $y\in\R^{\ell}$ be a given (data) vector and $\F\colon\cH\to\R^{\ell}$ be a given linear and continuous operator. The problem is to find (a function) $F\in\cH$ such that
  \[
  \F F=y\, .
  \]
\end{probl}
The RFMP tries to iteratively construct a sequence $(F_n)_n\subset\cH$ of approximations to the solution $F$.
%
%
\begin{algo}[Regularized Functional Matching Pursuit, RFMP]\label{RFMP}
  Let an initial approximation $F_0\in\cH$ (e.g.\ $F_0=0$) be given. Moreover, choose a dictionary $\cD\subset\cH\setminus\{0\}$ of (possibly useful) trial functions.
  \begin{enumerate}
    \item Initialize the step number to $n\coloneqq0$ and the residual to $R^0\coloneqq y-\F F_0$ and choose a regularization parameter $\la\in\R_0^+$.
    \item\label{Alg_Step2} Determine
    \begin{align}
    d_{n+1} &\coloneqq \operatorname*{\arg\,\max}_{d\in\cD} \frac{\left(\llan R^n,\F d\rran_{\R^{\ell}} - \lambda \llan F_n, d\rran_\cH\right)^2}{\left\| \F d\right\|_{\R^{\ell}}^2 +\lambda \left\| d\right\|_\cH^2}\, , \label{Alg_Wahl_d}\\
    \alpha_{n+1} &\coloneqq \frac{\llan R^n,\F d_{n+1}\rran_{\R^{\ell}} - \lambda \llan F_n, d_{n+1}\rran_\cH}{\left\| \F d_{n+1}\right\|_{\R^{\ell}}^2 +\lambda \left\| d_{n+1}\right\|_\cH^2}\label{Alg_Wahl_alpha}
    \end{align}
    and set $F_{n+1}\coloneqq F_n+\alpha_{n+1}d_{n+1}$ and $R^{n+1}\coloneqq R^n-\alpha_{n+1} \F d_{n+1}$.
    \item Increase $n$ by 1 and go to step \ref{Alg_Step2}.
  \end{enumerate}
\end{algo}
In practice, the algorithm will be stopped by an appropriate criterion (see e.g.\ \cite{Michel_RFMP}). Since we are interested in a convergence theorem, we neglect this aspect here.
%
%
\section{The Convergence Theorem}

Several properties can be proved for the RFMP. We summarize here only one result which we will need (see \cite[Eq.\ (2) and Theorem 1]{Michel_RFMP}) for the convergence proof. Note that we used an $\rL^2$-space in the earlier publications instead of a general Hilbert space $\cH$. The proofs are, however, easily transferable to the general case.
\begin{lemma}\label{Convlemma}
  The sequences $(F_n)_n\subset\cH$ and $(R^n)_n\subset\R^{\ell}$ of the RFMP satisfy
  \begin{align}
  \MoveEqLeft{\left\|R^{n}\right\|_{\R^{\ell}}^2 +\lambda\left\|F_{n}\right\|_\cH^2 }\nonumber\\
  &= \left\|R^{n-1}\right\|_{\R^{\ell}}^2 +\lambda \left\|F_{n-1}\right\|_\cH^2 -\frac{\left(\llan R^{n-1},\F d_{n}\rran_{\R^{\ell}} - \lambda \llan F_{n-1}, d_{n}\rran_\cH\right)^2}{\left\| \F d_{n}\right\|_{\R^{\ell}}^2 +\lambda \left\| d_{n}\right\|_\cH^2}, \label{Gl2}
  \end{align}
  $n \in \NN$, such that the sequence $(\|R^n\|_{\R^{\ell}}^2 +\lambda \|F_n\|_\cH^2)_n$ is monotonically decreasing and convergent.
\end{lemma}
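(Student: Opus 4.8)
\emph{Plan of the proof.} The plan is to verify the identity \eqref{Gl2} by a direct expansion of the two squared norms using the update rules $F_n = F_{n-1} + \alpha_n d_n$ and $R^n = R^{n-1} - \alpha_n \F d_n$ from Step~\ref{Alg_Step2}, and then to read off the monotonicity and the convergence as immediate consequences. First I would expand, exploiting the bilinearity of $\llan\cdot,\cdot\rran_{\R^{\ell}}$ and $\llan\cdot,\cdot\rran_\cH$ together with the linearity of $\F$,
\begin{align*}
\|R^n\|_{\R^{\ell}}^2 &= \|R^{n-1}\|_{\R^{\ell}}^2 - 2\alpha_n \llan R^{n-1}, \F d_n\rran_{\R^{\ell}} + \alpha_n^2 \|\F d_n\|_{\R^{\ell}}^2 , \\
\la \|F_n\|_\cH^2 &= \la \|F_{n-1}\|_\cH^2 + 2\la\alpha_n \llan F_{n-1}, d_n\rran_\cH + \la\alpha_n^2 \|d_n\|_\cH^2 .
\end{align*}
Abbreviating the numerator and denominator in \eqref{Alg_Wahl_alpha} by $a_n \coloneqq \llan R^{n-1}, \F d_n\rran_{\R^{\ell}} - \la \llan F_{n-1}, d_n\rran_\cH$ and $b_n \coloneqq \|\F d_n\|_{\R^{\ell}}^2 + \la\|d_n\|_\cH^2$, so that $\alpha_n = a_n/b_n$, adding the two displays gives
\[
\|R^n\|_{\R^{\ell}}^2 + \la\|F_n\|_\cH^2 = \|R^{n-1}\|_{\R^{\ell}}^2 + \la\|F_{n-1}\|_\cH^2 - 2\alpha_n a_n + \alpha_n^2 b_n ,
\]
and inserting $\alpha_n = a_n/b_n$ collapses the last two terms to $-2 a_n^2/b_n + a_n^2/b_n = -a_n^2/b_n$, which is precisely the correction term in \eqref{Gl2}.

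For the remaining assertions I would argue that, since each $d\in\cD$ is nonzero, $\|d_n\|_\cH>0$ and hence $b_n \geq \la\|d_n\|_\cH^2 \geq 0$; assuming (as one must for the iteration to be meaningful) that $b_n>0$, so that $\alpha_{n+1}$ in \eqref{Alg_Wahl_alpha} is well defined, the subtracted term $a_n^2/b_n$ is nonnegative. Therefore the sequence $(\|R^n\|_{\R^{\ell}}^2 + \la\|F_n\|_\cH^2)_n$ is monotonically decreasing. Because $\la\geq 0$, it is also bounded below by $0$, and a monotonically decreasing sequence that is bounded below converges; this proves the lemma.

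The computation is routine, so I do not expect a genuine obstacle; the only point that requires care is the implicit assumption that the iteration is well defined, i.e. that the $\operatorname*{\arg\,\max}$ in \eqref{Alg_Wahl_d} is attained over the possibly infinite dictionary $\cD$ and that the denominator $b_n$ is strictly positive. For $\la>0$ the latter is automatic from $d_n\neq 0$; for $\la=0$ one additionally needs $\F d_n \neq 0$, i.e.\ the selected dictionary element must not lie in the kernel of $\F$. These are hypotheses on the set-up of Algorithm~\ref{RFMP} rather than on the identity \eqref{Gl2}, so I would record them separately (or refer to \cite{Michel_RFMP}) and keep the proof of Lemma~\ref{Convlemma} at the short argument above.
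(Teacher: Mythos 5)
Your proof is correct: the direct expansion of $\|R^n\|_{\R^{\ell}}^2$ and $\|F_n\|_\cH^2$ via the update rules, followed by substituting $\alpha_n = a_n/b_n$, is exactly the standard argument, and the paper itself does not reprove the lemma but cites it from \cite{Michel_RFMP}, where the same computation is carried out. Your added remark that well-definedness requires $b_n>0$ (automatic for $\lambda>0$, and needing $\F d_n\neq 0$ for $\lambda=0$) is a sensible observation that the paper handles later via condition \ref{ConvThcond2} of Theorem \ref{ConvTh}.
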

The following theorem improves \cite[Theorem 3.5]{Fischer_Diss}, \cite[Theorem 4.5]{FisMic2012}, \cite[Theorem 2]{Michel_RFMP} and \cite[Theorem 6.3]{MicTel2014}.
%
%

\begin{them}[Convergence Theorem]\label{ConvTh}
  Let the setting of Problem \ref{Problem} be given and let the dictionary $\cD\subset\cH\setminus\{0\}$ satisfy the following properties:
  \begin{enumerate}
    \item\label{ConvThcond1} \lq semi-frame condition\rq: There exists a constant $c>0$ and an integer $M\in\NN$ such that, for all expansions $H=\sum_{k=1}^\infty \beta_k d_k$ with $\beta_k\in\R$ and $d_k\in\cD$, where the $d_k$ are not necessarily pairwise distinct but $|\{j\in\NN \,|\, d_j=d_k\}|\leq M$ for each $k\in\NN$, the following inequality is valid:
        \[
        c\|H\|_\cH^2 \leq \sum_{k=1}^\infty \beta_k^2\, .
        \]
    \item\label{ConvThcond2} $C_1\coloneqq\inf_{d\in\cD} (\|\F d\|_{\R^{\ell}}^2+\lambda\|d\|_\cH^2)>0$.
  \end{enumerate}
  If the sequence $(F_n)_n$ is produced by the RFMP and no dictionary element is chosen more than $M$ times, then $(F_n)_n$ converges in $\cH$ to $F_\infty\coloneqq\sum_{k=1}^\infty \alpha_k d_k + F_0\in\cH$. Moreover, the following holds true:
  \begin{enumerate}[(a)]
    \item\label{ConvThconda} If $\cD$ is a spanning set for $\cH$ (i.e.\ $\overline{\spann \cD}^{\|\cdot\|_\cH}=\cH$), $\cD$ is bounded (i.e.\ $C_2\coloneqq\sup_{d\in\cD} \|d\|_\cH<+\infty$), and $\lambda\in\R_0^+$ is an arbitrary parameter, then $F_\infty$ solves the Tikhonov-regularized normal equation
      \[
      (\F^\ast \F +\lambda \Id)F_\infty =\F^\ast y\, ,
      \]
      where $\F^\ast$ is the adjoint operator corresponding to $\F$ and $\Id$ is the identity operator on $\cH$. This also yields that
        \[
        \left\| y-\F F_\infty \right\|_{\R^{\ell}}^2 + \lambda \left\| F_\infty\right\|_\cH^2 = \min_{F\in\cH}  \left( \left\| y-\F F \right\|_{\R^{\ell}}^2 + \lambda \left\| F\right\|_\cH^2\right)\, ,
        \]
        where the minimizer is unique, if $\lambda>0$.
    \item\label{ConvThcondb} If $\F\cD$ is a spanning set for $\rang\F \subset \R^{\ell}$ (i.e.\ $\spann \{\F d\, |\, d\in\cD \} = \rang \F$) and $\lambda=0$ (no regularization), then $F_\infty$ solves $\F F_\infty = \mathcal{P}_{\rang \F\,} y$, where $\mathcal{P}_{\rang \F}$ is the orthogonal projection onto $\rang \F$.
  \end{enumerate}
\end{them}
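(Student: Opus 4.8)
The plan is to read off square-summability of the coefficients $\alpha_k$ from the energy identity \eqref{Gl2}, to upgrade this to Cauchy-ness of $(F_n)_n$ via the semi-frame condition, and finally to pass to the limit in the optimality relations \eqref{Alg_Wahl_d}--\eqref{Alg_Wahl_alpha}. First I would telescope \eqref{Gl2}: writing $a_n:=\|R^n\|_{\R^{\ell}}^2+\lambda\|F_n\|_\cH^2\ge 0$ and using \eqref{Alg_Wahl_alpha}, the term subtracted in \eqref{Gl2} equals $\alpha_n^2(\|\F d_n\|_{\R^{\ell}}^2+\lambda\|d_n\|_\cH^2)$; since $(a_n)_n$ decreases and stays $\ge 0$ by Lemma~\ref{Convlemma}, summation yields $\sum_{k=1}^\infty \alpha_k^2(\|\F d_k\|_{\R^{\ell}}^2+\lambda\|d_k\|_\cH^2)\le a_0<\infty$, and condition~\ref{ConvThcond2} (each factor $\ge C_1>0$) then forces $\sum_{k=1}^\infty\alpha_k^2\le a_0/C_1<\infty$ and, in particular, $\alpha_n^2(\|\F d_n\|_{\R^{\ell}}^2+\lambda\|d_n\|_\cH^2)\to 0$.

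Next I would show that the partial sums $S_n:=\sum_{k=1}^n\alpha_k d_k=F_n-F_0$ are Cauchy in $\cH$. For $m>n$, the finite expansion $S_m-S_n=\sum_{k=n+1}^m\alpha_k d_k$ is an admissible input for condition~\ref{ConvThcond1}: it arises from the algorithm's infinite selection sequence $(d_k)_{k\in\NN}$ --- which by hypothesis repeats no dictionary element more than $M$ times --- by setting all coefficients with index outside $\{n+1,\dots,m\}$ to zero, so $c\|S_m-S_n\|_\cH^2\le\sum_{k=n+1}^m\alpha_k^2\to 0$. Completeness of $\cH$ then gives $F_n\to F_\infty=F_0+\sum_{k=1}^\infty\alpha_k d_k\in\cH$, and continuity of $\F$ gives $R^n=y-\F F_n\to R^\infty:=y-\F F_\infty$. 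I expect this to be the main obstacle: it is exactly here that the \lq semi-frame\rq\ condition and the bounded-multiplicity requirement are needed, and one has to invoke condition~\ref{ConvThcond1} only for genuinely admissible expansions of the differences $F_m-F_n$ --- this is the point at which the earlier convergence proofs had to be corrected and their hypotheses weakened.

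For part~\ref{ConvThconda} I would exploit the maximality of $d_{n+1}$: by \eqref{Alg_Wahl_d} and \eqref{Alg_Wahl_alpha}, for every $d\in\cD$,
\[
\frac{\big(\llan R^n,\F d\rran_{\R^{\ell}}-\lambda\llan F_n,d\rran_\cH\big)^2}{\|\F d\|_{\R^{\ell}}^2+\lambda\|d\|_\cH^2}\ \le\ \alpha_{n+1}^2\big(\|\F d_{n+1}\|_{\R^{\ell}}^2+\lambda\|d_{n+1}\|_\cH^2\big)\ \xrightarrow{n\to\infty}\ 0 .
\]
Using boundedness of $\cD$ and continuity of $\F$ to bound the denominator uniformly, $\|\F d\|_{\R^{\ell}}^2+\lambda\|d\|_\cH^2\le(\|\F\|^2+\lambda)C_2^2$, I obtain $\llan R^n,\F d\rran_{\R^{\ell}}-\lambda\llan F_n,d\rran_\cH\to 0$ uniformly in $d\in\cD$. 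Passing to the limit (with $\llan R^\infty,\F d\rran_{\R^{\ell}}=\llan\F^\ast R^\infty,d\rran_\cH$) yields $\llan\F^\ast R^\infty-\lambda F_\infty,d\rran_\cH=0$ for all $d\in\cD$, hence for all $d\in\spann\cD$, hence --- by continuity of the inner product and $\overline{\spann\cD}^{\|\cdot\|_\cH}=\cH$ --- for all $d\in\cH$, so $\F^\ast R^\infty=\lambda F_\infty$, i.e.\ $(\F^\ast\F+\lambda\Id)F_\infty=\F^\ast y$. Finally, since the Tikhonov functional $\Phi(F)=\|y-\F F\|_{\R^{\ell}}^2+\lambda\|F\|_\cH^2$ is convex and Fr\'echet differentiable with derivative vanishing precisely at the solutions of this normal equation, $F_\infty$ minimizes $\Phi$ over $\cH$; for $\lambda>0$ the functional is strictly convex (equivalently $\F^\ast\F+\lambda\Id$ is positive definite, hence invertible), so the minimizer is unique.

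For part~\ref{ConvThcondb}, where $\lambda=0$, the displayed estimate already gives, for each fixed $d\in\cD$, $\llan R^n,\F d\rran_{\R^{\ell}}^2\le\|\F d\|_{\R^{\ell}}^2\,\alpha_{n+1}^2\,\|\F d_{n+1}\|_{\R^{\ell}}^2\to 0$, hence $\llan R^\infty,\F d\rran_{\R^{\ell}}=0$; since $\spann\{\F d\,|\,d\in\cD\}=\rang\F$, this means $R^\infty\perp\rang\F$. As $\rang\F$ is a subspace of the finite-dimensional space $\R^{\ell}$ it is closed, so the splitting $y=\F F_\infty+R^\infty$ with $\F F_\infty\in\rang\F$ and $R^\infty\in(\rang\F)^\perp$ is the orthogonal decomposition of $y$ along $\rang\F$; therefore $\F F_\infty=\mathcal{P}_{\rang\F}\,y$, which is the assertion.
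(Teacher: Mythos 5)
Your proposal is correct and follows the same overall skeleton as the paper's proof: telescoping \eqref{Gl2} with condition \ref{ConvThcond2} to get $\sum_k\alpha_k^2<\infty$, the semi-frame condition to get convergence of $(F_n)_n$ in $\cH$, the maximality in \eqref{Alg_Wahl_d} to kill the quantities $\llan R^n,\F d\rran_{\R^\ell}-\lambda\llan F_n,d\rran_\cH$, and then density of $\spann\cD$ respectively the spanning property of $\F\cD$ to conclude. The one place where you genuinely deviate is the density step in part (\ref{ConvThconda}): the paper keeps $n$ finite, proves $\llan\F^\ast R^n-\lambda F_n,d\rran_\cH\to0$ for $d\in\spann\cD$, and then needs uniform boundedness of $(\F^\ast R^n-\lambda F_n)_n$ plus the Moore--Osgood double-limit theorem to extend this to all $d\in\cH$; you instead pass $n\to\infty$ first, using the already-established strong convergence $R^n\to R^\infty$, $F_n\to F_\infty$, so that the single fixed vector $\F^\ast R^\infty-\lambda F_\infty$ is orthogonal to $\cD$, hence to $\overline{\spann\cD}=\cH$. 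Your route avoids the limit interchange altogether, which is precisely the step the paper's remark identifies as the source of the error in the earlier versions. Two further small points in your favour: you apply the semi-frame condition only to finite truncations $\sum_{k=n+1}^m\alpha_k d_k$ and obtain $F_\infty$ from completeness, rather than plugging the not-yet-justified infinite tail into condition \ref{ConvThcond1}; and your part (\ref{ConvThcondb}) argument (multiplying through by the fixed constant $\|\F d\|_{\R^\ell}^2$) does not invoke the bound $C_2$, which is consistent with the fact that boundedness of $\cD$ is only hypothesised in part (\ref{ConvThconda}).
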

\begin{proof}
  With (\ref{Alg_Wahl_alpha}), (\ref{Gl2}), condition \ref{ConvThcond2} of the current theorem and Lemma \ref{Convlemma}, we obtain
  \begin{align*}
  \sum_{k=n}^\infty \alpha_k^2 &=\sum_{k=n}^\infty \frac{ \left\|R^{k-1}\right\|_{\R^{\ell}}^2 + \lambda \left\|F_{k-1}\right\|_\cH^2 - \left( \left\|R^k\right\|_{\R^{\ell}}^2 +\lambda \left\|F_k\right\|_\cH^2\right)}{\left\|\F d_k \right\|_{\R^{\ell}}^2 + \lambda \left\| d_k\right\|_\cH^2}
    \nonumber\\
  &\leq \frac{1}{C_1}\, \sum_{k=n}^\infty \left[ \left\|R^{k-1}\right\|_{\R^{\ell}}^2 + \lambda \left\|F_{k-1}\right\|_\cH^2 - \left( \left\|R^k\right\|_{\R^{\ell}}^2 +\lambda \left\|F_k\right\|_\cH^2\right)\right] \nonumber\\
  &= \frac{1}{C_1} \left[ \left\|R^{n-1}\right\|_{\R^{\ell}}^2 +\lambda\left\|F_{n-1}\right\|_\cH^2 - \lim_{k\to\infty} \left( \left\|R^k\right\|_{\R^{\ell}}^2 +\lambda \left\|F_k\right\|_\cH^2\right)\right]\, .
  \end{align*}
  Consequently, $\lim_{n\to\infty}\sum_{k=n}^\infty \alpha_k^2=0$ and, hence, $\lim_{n\to\infty}\alpha_n=0$. We can define $F_\infty\coloneqq\sum_{k=1}^\infty \alpha_k d_k + F_0$, which is an element of $\cH$ due to the semi-frame condition and the previous estimate. Indeed, $(F_n)_n$ converges to $F_\infty$ in $\cH$ (in the strong sense), also due to the semi-frame condition, which we can see as follows:
  \[
  \lim_{n\to\infty} \left\|F_\infty-F_{n-1}\right\|_\cH^2 = \lim_{n\to\infty} \left\|\sum_{k=n}^\infty\alpha_k d_k \right\|_\cH^2 \leq \frac{1}{c} \, \lim_{n\to\infty} \sum_{k=n}^\infty\alpha_k^2 = 0\, .
  \]
  Since $\F$ is continuous, also $(\F F_n)_n$, $(R^n)_n=(y-\F F_n)_n$ and $(\F^\ast R^n)_n$ must converge (strongly).

  Due to the continuity of $\F$, the operator norm $\|\F\|_\cL \coloneqq  \sup_{F\in\cH,\, F\neq 0} \frac{\|\F F\|_{\R^{\ell}}}{\|F\|_\cH}$ is finite. We use this together with the boundedness of the dictionary and (\ref{Alg_Wahl_d}) and get, for all $d\in\cD$, the estimate
  \begin{align}\label{eq:EstimateAlpha}
  \alpha_{n+1}^2
  &\geq \frac{1}{(\|\F\|_{\cL}^2 +\lambda)\left\|d_{n+1}\right\|_\cH^2}\, \frac{\left(\llan R^n,\F d_{n+1}\rran_{\R^{\ell}} - \lambda \llan F_n, d_{n+1}\rran_\cH\right)^2}{\left\| \F d_{n+1}\right\|_{\R^{\ell}}^2 +\lambda \left\| d_{n+1}\right\|_\cH^2}\nonumber \notag\\
  &\geq \frac{1}{(\|\F\|_{\cL}^2 +\lambda)C_2^2}\, \frac{\left(\llan R^n,\F d\rran_{\R^{\ell}} - \lambda \llan F_n, d\rran_\cH\right)^2}{\left\| \F d\right\|_{\R^{\ell}}^2 +\lambda \left\| d\right\|_\cH^2} \, .
  \end{align}
  Let us now concentrate on case (\ref{ConvThconda}). Since $\lim_{n\to\infty}\alpha_n=0$, an immediate consequence of the estimate in \eqref{eq:EstimateAlpha} is
  \begin{equation}
  \llan R^n,\F d\rran_{\R^{\ell}} - \lambda \llan F_n, d\rran_\cH = \llan \F^\ast R^n-\lambda F_n,d\rran_\cH \longrightarrow 0 \quad \mbox{as } n\to\infty\label{weakconv}
  \end{equation}
  for all $d\in\cD$.
  Due to the bilinearity of the inner product and the algebraic limit theorem, we also have
  \[
  \llan \F^\ast R^n -\lambda F_n, d\rran_\cH \longrightarrow 0 \quad \mbox{as } n\to\infty
  \]
  for all $d\in\spann\cD$.
  As we derived above, $(\F^\ast R^n - \lambda F_n)_n$ is a strongly convergent and, thus, bounded, sequence. Now let $d\in\cH$ be arbitrary. Due to the first condition in part \eqref{ConvThconda}, there exists a sequence $(\tilde{d}_m)_m\subset\spann\cD$ such that $\|\tilde{d}_m-d\|_\cH\to 0$ as $m\to\infty$. Then the Cauchy-Schwarz inequality yields
  \begin{align*}
    \left| \llan \F^\ast R^n -\lambda F_n, \tilde{d}_m-d\rran_\cH \right| &\leq \left\|\F^\ast R^n - \lambda F_n\right\|_\cH \, \left\|\tilde{d}_m-d\right\|_\cH\\
    &\leq \sup_{n\in\NN_0} \left\|\F^\ast R^n - \lambda F_n\right\|_\cH \, \left\|\tilde{d}_m-d\right\|_\cH\\
    &\rightarrow 0 \quad \text{as }m\to\infty\,.
  \end{align*}
  Since this convergence for $m\to\infty$ is uniform with respect to $n$, we get, by applying the Moore-Osgood double limit theorem, the identity
  \begin{align*}
    \lim_{n\to\infty} \llan \F^\ast R^n -\lambda F_n, d\rran_\cH &= \lim_{n\to\infty}\lim_{m\to\infty} \llan \F^\ast R^n -\lambda F_n, \tilde{d}_m\rran_\cH\\
    &= \lim_{m\to\infty}\lim_{n\to\infty} \llan \F^\ast R^n -\lambda F_n, \tilde{d}_m\rran_\cH\\
    &= 0\, .
  \end{align*}
  This shows that $(\F^\ast R^n)_n$ weakly converges to $\lambda F_\infty$ (and, due to the considerations above, also strongly). Consequently, since $\F^\ast R^n = \F^\ast y -\F^\ast \F F_n$, we obtain, using again the continuity of $\F$, that
  \[
  \F^\ast y - \F^\ast \F F_\infty = \lambda F_\infty\, ,
  \]
  which is equivalent to
  \begin{equation}
    \F^\ast y = \left(\F^\ast\F +\lambda \Id\right)F_\infty\, .\label{Normgl}
  \end{equation}

  It is a basic result of Tikhonov regularization (see e.g.\ \cite[p.\ 89]{Louis_Buch}) that every solution of \eqref{Normgl} minimizes
  \begin{equation}
    \|y-\F F\|_{\R^{\ell}}^2 + \lambda\|F\|_\cH^2
    = \|y\|_{\R^{\ell}}^2 + \llan \left( \F^\ast\F+\lambda \Id \right) \left(F-F_\infty\right), F-F_\infty\rran_\cH - \llan\F^\ast y, F_\infty\rran_\cH \,.\label{ConvThBeweqeight}
  \end{equation}
  If $\lambda>0$, then \eqref{Normgl} and the minimization of \eqref{ConvThBeweqeight} both are uniquely solvable by
  \[
  F_\infty= (\F^\ast\F+\lambda \Id)^{-1} \F^\ast y\, .
  \]
%
  For the remaining proof of part \eqref{ConvThcondb} of the theorem, we observe again the estimate in \eqref{eq:EstimateAlpha} with $\lambda = 0$
  \begin{equation*}
    0 \leq \frac{1}{(C_2 \|\F\|_{\cL} )^2 }\, \frac{\llan R^n,\F d\rran_{\R^{\ell}}^2}{\left\| \F d\right\|_{\R^{\ell}}^2} \leq \alpha_{n+1}^2 \longrightarrow 0 \text{ as } n \to \infty\, . 
  \end{equation*}
  With the sandwich theorem, we directly obtain $\lim_{n\to\infty}\llan R^n,\F d\rran_{\R^{\ell}} = 0$ for all $d \in \cD$. Since $\F\mathcal{D}$ is a spanning set for $\rang \F$, which is closed since $\F$ is a finite rank operator, we obtain for all $f \in \rang \F$
  \begin{equation*}
    0 =  \lim_{n\to\infty}\llan R^n, f \rran_{\R^{\ell}} =  \lim_{n\to\infty}\llan \mathcal{P}_{\rang \F\,} R^n, f \rran_{\R^{\ell}} =  \llan \mathcal{P}_{\rang \F\,} R^\infty, f \rran_{\R^{\ell}},
  \end{equation*}
  note that the orthogonal projection $\mathcal{P}_{\rang \F\,}$ is continuous. Thus, $(\mathcal{P}_{\rang \F\,} R^n)_{n}$ converges weakly to zero. In addition, due to the continuity of $\F$, the sequence $(R^n)_{n}$ converges strongly, that is, $R^\infty = \lim_{n\to \infty} R^n$. Due to the uniqueness of the limit and the continuity of the orthogonal projection, we obtain $\mathcal{P}_{\rang \F\,} R^\infty = 0$. Eventually, we get
  \begin{equation*}
    \F F_\infty = \mathcal{P}_{\rang \F\,} (\F F_\infty) = \lim_{n\to\infty} \mathcal{P}_{\rang \F\,} (\F F_n) = \lim_{n\to\infty} \mathcal{P}_{\rang \F\,} ( y - R^n) = \mathcal{P}_{\rang \F\,} y\, . \tag*{\qedhere}
  \end{equation*}
\end{proof}

Note that in the case of a surjective operator $\F$, the statement in part \eqref{ConvThcondb} of the latter theorem coincides with the previous versions in  \cite[Theorem 3.5]{Fischer_Diss}, \cite[Theorem 4.5]{FisMic2012}, \cite[Theorem 2]{Michel_RFMP} and \cite[Theorem 6.3]{MicTel2014}.

\begin{coll}
  If the condition in item (\ref{ConvThcondb}) from the previous theorem is replaced by
  \begin{enumerate}
  \item[(b)]  If $\F\cD$ is a spanning set for the closed set $\mathbb{G} \subset \rang \F \subset \R^{\ell}$ and $\lambda=0$, 
  \end{enumerate}
  then $F_\infty$ solves $\F F_\infty = \mathcal{P}_{\mathbb{G}} y$, where $\mathcal{P}_{\mathbb{G}}$ is the orthogonal projection onto $\mathbb{G}$.
\end{coll}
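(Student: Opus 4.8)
The plan is to repeat the argument of part \eqref{ConvThcondb} of Theorem \ref{ConvTh} with $\rang\F$ replaced by $\mathbb{G}$ throughout. First I would note that the estimate \eqref{eq:EstimateAlpha} with $\lambda=0$ holds for every $d\in\cD$ and that $\alpha_n\to 0$ — both facts were established in the proof of Theorem \ref{ConvTh} before the case distinction and without using any spanning hypothesis (only condition \ref{ConvThcond2}, the boundedness of $\cD$, the continuity of $\F$ and Lemma \ref{Convlemma}). Hence the sandwich theorem gives $\langle R^n,\F d\rangle_{\R^{\ell}}\to 0$ for all $d\in\cD$, and by bilinearity of the inner product this extends to $\langle R^n,f\rangle_{\R^{\ell}}\to 0$ for every $f\in\spann\{\F d\mid d\in\cD\}=\mathbb{G}$; no closure operation is needed here since $\mathbb{G}$ is a finite-dimensional, hence closed, subspace of $\R^{\ell}$.

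Next, exactly as in Theorem \ref{ConvTh}, the semi-frame condition still yields the strong convergence $F_n\to F_\infty$ in $\cH$, and continuity of $\F$ then gives $R^n\to R^\infty\coloneqq y-\F F_\infty$ in $\R^{\ell}$. Passing to the limit in $\langle R^n,f\rangle_{\R^{\ell}}\to 0$ shows $\langle R^\infty,f\rangle_{\R^{\ell}}=0$ for all $f\in\mathbb{G}$, that is, $\mathcal{P}_{\mathbb{G}}R^\infty=0$, the orthogonal projection $\mathcal{P}_{\mathbb{G}}$ being well defined and continuous because $\mathbb{G}$ is closed. Equivalently, $\mathcal{P}_{\mathbb{G}}y=\mathcal{P}_{\mathbb{G}}(\F F_\infty)$.

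It remains to upgrade this to $\F F_\infty=\mathcal{P}_{\mathbb{G}}y$, and this is the only point that is genuinely new compared with Theorem \ref{ConvTh}: there the chain begins with the tautology $\F F_\infty=\mathcal{P}_{\rang\F}(\F F_\infty)$ (since $\F F_\infty\in\rang\F$ always), whereas here one has to check that $\F F_\infty$ actually lies in $\mathbb{G}$. I would argue that $\F F_\infty=\F F_0+\sum_{k=1}^\infty\alpha_k\F d_k$ by continuity of $\F$, that the series on the right is the $\R^{\ell}$-limit of the partial sums $\sum_{k=1}^N\alpha_k\F d_k\in\spann\{\F d\mid d\in\cD\}=\mathbb{G}$, and that therefore $\sum_{k=1}^\infty\alpha_k\F d_k\in\mathbb{G}$ because $\mathbb{G}$ is closed; consequently $\F F_\infty\in\mathbb{G}$ whenever $\F F_0\in\mathbb{G}$, in particular for the standard initialisation $F_0=0$. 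Once $\F F_\infty\in\mathbb{G}$ is known, the same computation as in the proof of part \eqref{ConvThcondb},
\[
\F F_\infty=\mathcal{P}_{\mathbb{G}}(\F F_\infty)=\lim_{n\to\infty}\mathcal{P}_{\mathbb{G}}(\F F_n)=\lim_{n\to\infty}\mathcal{P}_{\mathbb{G}}(y-R^n)=\mathcal{P}_{\mathbb{G}}y,
\]
finishes the proof. The main (minor) obstacle is thus precisely this last verification of the membership $\F F_\infty\in\mathbb{G}$; everything else is inherited verbatim from Lemma \ref{Convlemma} and the proof of Theorem \ref{ConvTh}.
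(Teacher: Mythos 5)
Your proof is correct and follows essentially the same route as the paper's: deduce $\llan R^n,\F d\rran_{\R^{\ell}}\to 0$ from \eqref{eq:EstimateAlpha}, extend to all of $\mathbb{G}$ by linearity, and combine the strong convergence of $(R^n)_n$ with the continuity of $\mathcal{P}_{\mathbb{G}}$ to obtain $\mathcal{P}_{\mathbb{G}}(y-\F F_\infty)=0$. Your explicit verification that $\F F_\infty\in\mathbb{G}$ (via the partial sums $\sum_{k=1}^N\alpha_k\F d_k\in\spann\{\F d\mid d\in\cD\}$ and the closedness of $\mathbb{G}$) is exactly the step the paper compresses into the phrase \lq since $\mathbb{G}$ is closed and $\F\cD$ spans $\mathbb{G}$\rq, and your observation that this requires $\F F_0\in\mathbb{G}$ (automatic for $F_0=0$, and automatic in part \eqref{ConvThcondb} of Theorem \ref{ConvTh} where $\mathbb{G}=\rang\F$) is a genuine refinement that the paper leaves implicit.
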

\begin{proof}
  In analogy to the previous proof, we directly obtain $\lim_{n\to\infty}\llan R^n,\F d\rran_{\R^{\ell}} = 0$ for all $d \in \cD$. Since $\mathbb{G}$ is closed and $\rang \F$ is a spanning set for $\mathbb{G}$, we get for all $g \in \mathbb{G}$
  \begin{align*}
    0 &= \lim_{n\to\infty}\llan R^n,g\rran_{\R^{\ell}} = \lim_{n\to\infty} \left(\llan \mathcal{P}_{\mathbb{G}} R^n,g\rran_{\R^{\ell}} + \llan  \mathcal{P}_{\mathbb{G}^\perp} R^n,g\rran_{\R^{\ell}}\right) \\
    &= \lim_{n\to\infty} \llan \mathcal{P}_{\mathbb{G}} R^n,g\rran_{\R^{\ell}}.
  \end{align*}
  Thus, $ \mathcal{P}_{\mathbb{G}} R^n$ converges weakly to zero. Hence the continuity of $\F$ and the uniqueness of the limits yields
  \begin{align*}
   0 = \mathcal{P}_{\mathbb{G}} (y- \F F_\infty) = \mathcal{P}_{\mathbb{G}} y - \F F_\infty,
  \end{align*}
  since $\mathbb{G}$ is closed and $\mathcal{F}\mathcal{D}$ spans $\mathbb{G}$.
\end{proof}

In \cite[Lem. 4.2.5]{Tel2014}, it was proved that $F_\infty \in (\ker \F)^\perp$. Hence, the condition $\overline{\spann \cD}^{\|\cdot\|_\cH}=\cH$ in the convergence theorem is unnecessarily strong. If more knowledge of the operator $\F$ is available, for instance, the singular value decomposition $(\sigma_j; x_j, y_j)$, the condition for the dictionary in case \eqref{ConvThconda} of Theorem \ref{ConvTh} can be weakened.

\begin{them}
Let $F_\infty$ denote the unique solution of the Tikhonov-regularized normal equation $(\F^\ast\F+\lambda \Id) F_\infty= \F^\ast y$, where $\lambda > 0$. Let $(\sigma_j; x_j, y_j)$ denote the singular system of $\F$ and let the set $V$ be defined by $V \coloneqq \overline{\bigcup_{j \in J} \{x_j\}}$, where $J \subset \NN$ is a countable index set.
    If the conditions of Theorem \ref{ConvTh} with the case \eqref{ConvThconda} are satisfied, except that $\cD$ is (only) a spanning set for $V$,
    then the solution $F_{\infty, V}$ produced by the RFMP solves 
      \[
      F_{\infty, V} = \mathcal{P}_{V} F_\infty\, ,
      \]
      where $\mathcal{P}_{V}$ is the orthogonal projection onto $V$.
\end{them}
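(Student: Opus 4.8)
The plan is to use Theorem \ref{ConvTh} as a black box for the convergence of the iteration and to reduce the identification of its limit to an injectivity argument built on the singular system. First I set up notation: as $\F$ maps into a finite-dimensional space, only finitely many singular triples $(\sigma_j;x_j,y_j)$ occur, and $V$ is to be read as the closed linear span of $\{x_j\mid j\in J\}$, so that $\{x_j\mid j\in J\}$ is an orthonormal basis of $V$ and $\mathcal{P}_V=\sum_{j\in J}\langle\,\cdot\,,x_j\rangle_\cH\,x_j$. Since on all of $\cH$ one has $\F^\ast\F=\sum_j\sigma_j^2\langle\,\cdot\,,x_j\rangle_\cH\,x_j$ and $V$ is spanned by some of the eigenvectors $x_j$, the projection $\mathcal{P}_V$ commutes with $\F^\ast\F$, hence with $\F^\ast\F+\lambda\Id$. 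I also take $F_0\in V$ (e.g.\ $F_0=0$) — the natural choice once $\cD$ spans only $V$, and one that the stated conclusion tacitly requires. With conditions \ref{ConvThcond1} and \ref{ConvThcond2} still in force, Theorem \ref{ConvTh} applies unchanged and gives $F_n\to F_{\infty,V}:=F_0+\sum_k\alpha_kd_k$ in $\cH$ with $F_{\infty,V}\in\overline{\spann\cD}^{\|\cdot\|_\cH}=V$.

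Next I recover the orthogonality relation established inside the proof of Theorem \ref{ConvTh}\eqref{ConvThconda}: there it is shown that $\langle\F^\ast R^n-\lambda F_n,d\rangle_\cH\to0$ for every $d\in\cD$. By bilinearity this passes to $\spann\cD$, and then by exactly the Cauchy--Schwarz/Moore--Osgood double-limit argument used there — which only uses that $(\F^\ast R^n-\lambda F_n)_n$ is strongly convergent, hence bounded — it passes to every $d\in\overline{\spann\cD}=V$. Because $F_n\to F_{\infty,V}$ and $\F$ is continuous, $R^n=y-\F F_n$ and $\F^\ast R^n-\lambda F_n$ converge strongly, with $\F^\ast R^n-\lambda F_n\to\F^\ast y-(\F^\ast\F+\lambda\Id)F_{\infty,V}$; taking limits in the relation above yields
\[
\mathcal{P}_V\bigl[\F^\ast y-(\F^\ast\F+\lambda\Id)F_{\infty,V}\bigr]=0 .
\]

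Finally I identify the limit. Using that $\mathcal{P}_V$ commutes with $\F^\ast\F+\lambda\Id$ and that $F_{\infty,V}\in V$, the second term above equals $(\F^\ast\F+\lambda\Id)F_{\infty,V}$; using $\F^\ast y=(\F^\ast\F+\lambda\Id)F_\infty$ and the same commutation, the first term equals $(\F^\ast\F+\lambda\Id)\mathcal{P}_VF_\infty$. Hence $(\F^\ast\F+\lambda\Id)F_{\infty,V}=(\F^\ast\F+\lambda\Id)\mathcal{P}_VF_\infty$, and since $\F^\ast\F+\lambda\Id$ is injective for $\lambda>0$, we conclude $F_{\infty,V}=\mathcal{P}_VF_\infty$. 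I expect the only genuinely delicate step to be the extension of the orthogonality relation from $\cD$ to all of $V$; this is, however, verbatim the argument already carried out for Theorem \ref{ConvTh}\eqref{ConvThconda}, so the whole proof is short. (Should one wish to avoid the commutation shortcut, one can instead expand both sides of the displayed identity in the orthonormal system $\{x_j\mid j\in J\}$ and compare coefficients with the closed form $F_\infty=\sum_j\sigma_j\langle y,y_j\rangle_{\R^{\ell}}\,(\sigma_j^2+\lambda)^{-1}x_j$, obtained from $F_\infty=(\F^\ast\F+\lambda\Id)^{-1}\F^\ast y$.)
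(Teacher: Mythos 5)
Your proof is correct and follows essentially the same route as the paper: both reduce to the Tikhonov-regularized normal equation restricted to $V$ and then identify $F_{\infty,V}=\mathcal{P}_V F_\infty$ via the commutation of $\mathcal{P}_V$ with $\F^\ast\F+\lambda\Id$ (each $x_j$ lying in $V$ or $V^\perp$) and the injectivity of that operator. The only cosmetic difference is that the paper obtains the restricted normal equation by applying Theorem \ref{ConvTh} as a black box to $\F_V=\F\mathcal{P}_V$ on the Hilbert space $V$, whereas you re-derive it by rerunning the weak-convergence argument of Theorem \ref{ConvTh}\eqref{ConvThconda} with test elements from $V$.
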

\begin{proof}
  $(V, \lan \cdot, \cdot \ran_{\cH})$ is a Hilbert space, since $V \subset \cH$ is closed. The operator $\F_V \coloneqq \F \mathcal{P}_V$ is a bounded operator $\F_V \colon \cH \to \R^\ell$, and hence, its restriction $\F_V|_V \colon V \to \R^\ell$ is also bounded, where $\F_V|_V = \F|_V$. We can apply Theorem \ref{ConvTh} to this setting and obtain the solution $F_{\infty,V} \in V$ produced by the RFMP, which solves the Tikhonov-regularized normal equation in $V$, that is,
  \begin{equation*}
     (\F_V^\ast \F_V +\lambda \Id_V)F_{\infty,V} =\F_V^\ast y\, .
  \end{equation*}

    In order to prove that $F_{\infty,V}$ is the best approximation of $F_{\infty}$ in $V$, it remains to show that $F_{\infty,V} = \mathcal{P}_V F_{\infty}$. For this purpose, we study the singular system $(\sigma_j; x_j, y_j)$ of $\F$, which exists due to the compactness of $\F$. Due to the construction of $V$, we obtain, for each $j\in\NN$, that $x_j$ is either in $V$ or in $V^\perp$. Hence, $\F^\ast \F$ and $\mathcal{P}_V$ commute, that is,
    \begin{align*}
        \mathcal{P}_V \F^\ast \F F = \sum_{\substack{j=1, \\ x_j \in V}}^\infty \sigma_j^2  \llan F, x_j \rran x_j = \F^\ast \F \mathcal{P}_V F\, \qquad \text{for all } F \in \mathcal{H}.
    \end{align*}
    Due to $\F_V = \F \mathcal{P}_V$, we directly obtain $\F^\ast_V = \mathcal{P}_V \F^\ast$. For $F_{\infty,V}$, we get
    \begin{align*}
        (\F^\ast \F + \lambda \Id) F_{\infty, V} 
        &= (\F^\ast \F \mathcal{P}_V^2 + \lambda \mathcal{P}_V) F_{\infty, V} 
        = (\mathcal{P}_V \F^\ast \F \mathcal{P}_V  + \lambda \mathcal{P}_V ) F_{\infty, V}\\
        &= ( \F^\ast_V \F_V  + \lambda \mathcal{P}_V ) F_{\infty, V}
        = \F^\ast_V y = \mathcal{P}_V \F^\ast y \\
        &= \mathcal{P}_V ( (\F^\ast \F + \lambda \Id) F_\infty) 
        =   (\F^\ast \F + \lambda \Id) \mathcal{P}_V F_\infty.
    \end{align*}
    Since $\F^\ast \F + \lambda \Id$ is one-to-one, we eventually get $F_{\infty, V} = \mathcal{P}_V F_\infty$.
\end{proof}
In the case of a non-injective operator $\F$, we can choose, for example, $V = (\ker \F)^\perp$ and obtain $F_{\infty,(\ker \F)^\perp} = \mathcal{P}_{(\ker \F)^\perp} F_\infty = F_\infty$.

\begin{rem}
  The corrections of the previous versions of the convergence theorem and its proof are as follows:
  \begin{itemize}
    \item In the semi-frame condition, it is now required that there is a maximum number of recurring choices of the same dictionary element. This maximum number can be arbitrarily large as long as it is finite and universal for all considered expansions. The reason for this limitation, which then also has to be required for the solution $F_\infty$ generated by the RFMP (see the proof of the existence of the limit $F_\infty$ above), is that the semi-frame condition could not be satisfied otherwise, as we pointed out for the convergence proof of the ROFMP in \cite{MicTel2016}: if $c$ is the constant of the semi-frame condition, then an unlimited number of recurring choices of dictionary elements could yield (with $d\in\cD$ arbitrary)
        \[
        c\left(\sum_{k=1}^N \frac{1}{k}\right)^2 \|d\|^2_\cH= c\left\|\sum_{k=1}^N \frac{1}{k}\, d\right\|^2_\cH \leq \sum_{k=1}^N\frac{1}{k^2} \quad \text{for all }N\in\NN\,,
        \]
        which cannot be satisfied for any $c>0$. Certainly, the semi-frame condition now has to be seen even more critically, since it implies limitations for the algorithmic choice of the dictionary elements. There is still a gap between the theory of the method, which uses limits to infinity as justifications for the results, and the practical implementation where, certainly, only finite dictionaries and stopped iterations are used.
    \item The conclusions after \eqref{weakconv} were previously erroneous and were corrected here. As we showed here, the strong convergence of $(\F^\ast R^n-\lambda F_n)_n$ is used to obtain the weak convergence to $0$ in $\cH$ out of \eqref{weakconv}. However, in \cite{Michel_RFMP}, it was only \emph{after} this corresponding part that the strong convergence was proved.
  \end{itemize}
\end{rem}
\enlargethispage{2\baselineskip}
\section*{Acknowledgements}
We gratefully acknowledge the support by the German Research Foundation (DFG), project MI 655/10-1.
%
%


\begin{thebibliography}{9}

\bibitem{Fischer_Diss} D.\ Fischer: Sparse Regularization of a Joint Inversion of Gravitational Data and Normal Mode Anomalies, PhD thesis, Geomathematics Group, Department of Mathematics, University of Siegen, Verlag Dr. Hut, Munich, 2011.

\bibitem{FisMic2012}D.\ Fischer, V.\ Michel: Sparse regularization of inverse gravimetry --- case study: spatial and temporal mass variations in South America, Inverse Problems, 28 (2012), 065012 (34pp).

\bibitem{Louis_Buch} A.K.\ Louis: Inverse und schlecht gestellte Probleme, Teubner, Stuttgart, 1989.

\bibitem{Michel_RFMP}V.\ Michel: RFMP --- An iterative best basis algorithm for inverse problems in the geosciences, in: Handbook of Geomathematics (W. Freeden, M.Z. Nashed, and T. Sonar, eds.), 2nd edition, Springer, Berlin, Heidelberg, 2015, pp. 2121-2147.

\bibitem{MicTel2014} V.\ Michel, R.\ Telschow: A non-linear approximation method on the sphere, International Journal on Geomathematics, 5 (2014), 195-224.

\bibitem{MicTel2016}V.\ Michel, R.\ Telschow: The regularized orthogonal functional matching pursuit for ill-posed inverse problems, SIAM Journal on Numerical Analysis, 54 (2016), 262-287.

\bibitem{Tel2014} R.\ Telschow: An Orthogonal Matching Pursuit for the Regularization of Spherical Inverse Problems,  PhD thesis, Geomathematics Group, Department of Mathematics, University of Siegen, Verlag Dr. Hut, Munich, 2014.
\end{thebibliography}
\end{document}